\newtheorem{theorem}{Theorem}[section]
\newtheorem{lemma}[theorem]{Lemma}
\theoremstyle{definition}
\newtheorem{definition}[theorem]{Definition}
\newtheorem{corollary}{Corollary}[theorem]
\theoremstyle{remark}
\numberwithin{equation}{section}
\begin{document}

\title{Proof of bijection for combinatorial number system}

\author{Abu Bakar Siddique}

\address{Department of Electrical Engineering, UET Lahore}
\curraddr{NWN Lab, Al-Khwarizmi Institute of Computer Science, UET Lahore}
\email{mabs239@gmail.com}
\thanks{The first author was supported by HEC, National ICT R\&D fund.}

\author{Saadia Farid}

\address{Department of Mathematics, UET Lahore}

%
\author{Muhammad Tahir}
\address{Department of Electrical Engineering, UET Lahore}


\keywords{Combinatorics, combinadics, binomial coefficients, mixed radix number system, ranking}

\begin{abstract}
Combinatorial number system represents a non-negative natural numbers as sum of binomial coefficients. This paper presents an induction proof that there exists unique representation of every non-negative natural number $m$ as sum of $r$ binomial coefficients.
\end{abstract}

\maketitle

\section{Introduction}
Combinadics or combinatorial number system is a mixed radix representation for natural numbers. The place value assigned to a binary digit is a binomial coefficient. By restricting the number of binomial coefficients, say to $r$, any non-negative integer $m$ can be represented uniquely as sum of $r$ binomial coefficients. A different value of $r$ would result in a different combinatorial number system. \\

Combinatorial number system solves the ranking/unranking problem by directly calculating the lexicographic order of a permutation of binary string with $r$ ones.  The bijection between a natural number and its combinatorial representation was first seen by D.H.Lehmer \cite{beckenbach1964}. Enumeration of combinations is a ubiquitous computer science problem with well established algorithms \cite{knuth2005}\cite{butler2011}. Finding combinations and their lexicographic order using combinatorial number system\cite{pascal1887} is also known as combinadics, a term credited to  James McCaffrey\cite{ mccaffrey2004}. \\

This paper presents a simple bijection proof between a number and its combinatorial representation using mathematical induction and the Hockey-Stick identity of the Pascal's triangle. After stating the combinadic theorem and helping lemmas, section-2 proves the existence of combinatorial representation for a non-negative natural number. Section-3 proves the uniqueness of such a representation.

%
%

\begin{theorem}[Combinadics]
$\forall m\geq0, m \in \mathbb{N}_{\geq 0}$, there exist unique $ r, C_r,\cdots,C_i,\cdots,C_1$ such that $C_i \geq 0$, and  $C_j>C_i$  for $ j>i$ and
\begin{eqnarray}
	\label{eqn:CSum}
	m&=& \sum_{i=1}^r \binom{C_i}{i} \\
	&=&\binom{C_r}{r}+\binom{C_{r-1}}{r-1}+\cdots+\binom{C_i}{i}+\cdots+\binom{C_2}{2}+\binom{C_1}{1} \nonumber
\end{eqnarray}

\end{theorem}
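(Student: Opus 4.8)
The plan is to argue by strong induction on $m$, proving existence and uniqueness together, and---crucially---to extract both the length $r$ and the coefficients $C_i$ from $m$ itself rather than treating $r$ as given. The engine of the induction is to peel off the leading term $\binom{C_r}{r}$: once this single term, and with it the index $r$, has been canonically identified, the remainder $m-\binom{C_r}{r}$ is strictly smaller and is handled by the inductive hypothesis. In this way the entire theorem reduces to the claim that $m$ determines a unique admissible \emph{leading} term $(r,C_r)$.

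For existence I would select the leading term greedily. I choose the largest binomial coefficient $\binom{C_r}{r}$ not exceeding $m$ that is eligible to head an admissible expansion (so that $C_r\ge r$ and the term is positive), which simultaneously fixes both $r$ and $C_r$. Pascal's rule $\binom{C_r+1}{r}=\binom{C_r}{r}+\binom{C_r}{r-1}$ then yields the key remainder estimate $m-\binom{C_r}{r}<\binom{C_r}{r-1}$, since $\binom{C_r+1}{r}>m$ by maximality. Applying the inductive hypothesis to $m-\binom{C_r}{r}$ produces an expansion with top index $r-1$, and the estimate guarantees its leading coefficient is $<C_r$, so the strict-increase condition $C_j>C_i$ survives when the new term is prepended. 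The base case $m=0$, where the expansion is empty (or degenerate), anchors the recursion.

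For uniqueness, and in particular for the uniqueness of $r$, the Hockey-Stick identity $\sum_{j=r}^{n}\binom{j}{r}=\binom{n+1}{r+1}$ is the decisive tool. It bounds the total value of \emph{any} admissible expansion whose leading term has index $r$ and top coefficient at most $n$, yielding a sharp inequality of the form $\sum_{i=1}^{r}\binom{C_i}{i}<\binom{C_r+1}{r}$. Comparing this bound against $m$ shows that no admissible expansion can have a leading term smaller than the greedy one, so the pair $(r,C_r)$ is forced; stripping it off and invoking the inductive hypothesis on the smaller remainder then forces every remaining coefficient to coincide.

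I expect the genuine obstacle to be exactly the uniqueness of $r$. Because a naive expansion can be lengthened by appending vanishing binomial coefficients, the length is pinned down only after one fixes the correct non-degeneracy convention for the leading term and shows, via the Hockey-Stick bound above, that this convention is incompatible with any competing length. Establishing that single comparison cleanly---so that $m$ admits no admissible expansion of a different length---is the crux of the argument; the coefficient-by-coefficient matching that follows is then routine.
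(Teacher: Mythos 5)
Your proposal founders on exactly the point you yourself flag as the crux: the uniqueness of $r$. Under the stated conditions ($C_i\geq 0$, strictly increasing, so vanishing terms such as $\binom{i-1}{i}$ are admissible), $r$ is genuinely \emph{not} determined by $m$: every $m$ has a representation for every length $r\geq 1$, obtained by padding with the zero block $\binom{j-2}{j-1}+\cdots+\binom{0}{1}$. Your proposed repair---a non-degeneracy convention requiring the leading term to be positive ($C_r\geq r$)---does not rescue this. Concretely, $1=\binom{1}{1}=\binom{2}{2}+\binom{0}{1}$, and even if you demand that \emph{all} terms be positive, $2=\binom{2}{1}=\binom{2}{2}+\binom{1}{1}$: two admissible expansions of different lengths, each with a positive leading term. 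Your Hockey-Stick bound $\sum_{i=1}^{r}\binom{C_i}{i}<\binom{C_r+1}{r}$ is correct (it is the paper's Corollary 1) but has no bite across different lengths: in the second example, $\binom{2}{2}+\binom{1}{1}=2<\binom{3}{2}=3$ is perfectly consistent with the bound, so the comparison cannot show that ``no admissible expansion can have a leading term smaller than the greedy one.'' Relatedly, your greedy selection of ``the largest binomial coefficient not exceeding $m$'' does not ``simultaneously fix $r$ and $C_r$'': a value does not determine the pair (e.g.\ $\binom{6}{1}=\binom{4}{2}=6$), and the strong induction is malformed as stated, since the hypothesis you invoke on $m-\binom{C_r}{r}$ must be the fixed-length-$(r-1)$ statement rather than one whose length is re-extracted from the remainder. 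The headline theorem's ``unique $r$'' is in fact a misstatement that the paper itself never proves: its actual existence and uniqueness theorems quantify over a \emph{fixed} $r\geq 1$ and establish existence and uniqueness of the $C_i$ only for that given $r$.

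Once $r$ is treated as a fixed parameter, the core of your argument is sound and is a genuinely different---and arguably cleaner---route than the paper's. You would induct on $r$: pick $C_r$ maximal with $\binom{C_r}{r}\leq m$, use Pascal's rule to get $m-\binom{C_r}{r}<\binom{C_r}{r-1}$ (hence the next coefficient is $<C_r$, preserving strict decrease), and recurse; the same maximality characterization, via the sharp Hockey-Stick estimate, yields uniqueness term by term in one pass. This is the classical Lehmer-style proof. The paper instead proves existence by induction on $m$ with $r$ fixed: it adds $1$ to the representation of $k$, collapses the maximal run of consecutive bottom coefficients via the Hockey-Stick identity (a carry propagation), and pads with the zero block $0^{j-1}$ to restore the term count; uniqueness is then a separate argument comparing two representations through their symmetric difference and the corollary $\binom{n+r}{r}>\sum_{i=1}^{r}\binom{n+i-1}{i}$. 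Your approach buys a unified existence-plus-uniqueness argument with an explicit algorithm for computing the $C_i$; the paper's buys a successor-style construction that makes the carry mechanism of the number system visible. But as submitted, your plan to pin down $r$ from $m$ is chasing a statement that is false under the paper's conditions, and no leading-term convention of the kind you describe can make it true.
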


We use the following definition for binomial coefficients for two non-negative integers $n,r\in\mathbb{N}_{\geq0}$

\begin{definition}

\begin{eqnarray*}
\binom{n}{r}&=&\frac{n!}{r!(n-r)!}~\text{for}~n\geq r\\
 &=&0~\text{otherwise}
\end{eqnarray*}
\end{definition}

To represent a zero in $r$ binomial coefficient system, equation-\ref{eqn:CSum} reduces to
\begin{equation}
\label{eqn:zero}
0 = \binom{r-1}{r}+\cdots+\binom{i-1}{i}+\cdots+\binom{1}{2}+\binom{0}{1}
\end{equation}

This is the uniqure representation of zero with the given restrictions on $C_i$'s. For subsequent argument we will make use of this zero. For brevity we use the notation $0^r$ to represent a zero consisting of $r$ binomial terms as per equation-\ref{eqn:zero}.

\begin{lemma}
\label{the:hockey}
We prove the following Hockey-Stick Identity for Pascal's triangle 
	\begin{equation}
		\label{eqn:hockey}
		\binom{n+1}{r} = \sum_{i=0}^{r} \binom{n-r+j}{j}	
	\end{equation}

\end{lemma}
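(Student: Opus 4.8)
The plan is to prove the identity by induction on the number of summands, using Pascal's rule $\binom{a}{b}=\binom{a-1}{b-1}+\binom{a-1}{b}$ as the single engine of the argument. I read the right-hand side as $\sum_{j=0}^{r}\binom{n-r+j}{j}$ (the index $i$ appearing under the summation sign should read $j$), and I work in the regime $n\ge r$ so that every argument $n-r+j$ is a genuine non-negative integer, matching the Definition above. The key realization is that the statement at $r$ is not directly amenable to induction on $r$, because the parameter $n-r$ shifts together with $r$; instead I would prove a \emph{stronger} statement about partial sums, from which the lemma falls out at the last step.

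Concretely, fixing $n$ and $r$, I would introduce the partial sum
\[
S_k=\sum_{j=0}^{k}\binom{n-r+j}{j}
\]
and prove by induction on $k$ the strengthened claim $S_k=\binom{n-r+k+1}{k}$. The base case $k=0$ is immediate, since $S_0=\binom{n-r}{0}=1=\binom{n-r+1}{0}$. For the inductive step I would write $S_{k+1}=S_k+\binom{n-r+k+1}{k+1}$, substitute the inductive hypothesis for $S_k$, and then collapse $\binom{n-r+k+1}{k}+\binom{n-r+k+1}{k+1}=\binom{n-r+k+2}{k+1}$ by Pascal's rule. Setting $k=r$ at the end yields $S_r=\binom{n-r+r+1}{r}=\binom{n+1}{r}$, which is exactly the asserted identity.

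The main obstacle is conceptual rather than computational: one must guess the correct closed form $\binom{n-r+k+1}{k}$ for the partial sum before the induction will go through, since a naive induction directly on $r$ stalls. Once that generalization is in hand, the only technical point to watch is the boundary behaviour of the binomial coefficients — in particular that the bottom index stays in range so that Pascal's rule applies verbatim and no $\binom{a}{b}$ with $a<0$ is ever invoked; restricting to $n\ge r$ handles this cleanly. I do not expect any difficulty beyond this.
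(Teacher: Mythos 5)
Your proof is correct and is essentially the paper's own argument: the paper repeatedly applies Pascal's rule $\binom{n+1}{r}=\binom{n}{r}+\binom{n}{r-1}$ to expand $\binom{n+1}{r}$ top-down into the sum, while you run the identical chain of Pascal-rule steps in reverse, packaged as a formal induction on the partial sums $S_k=\binom{n-r+k+1}{k}$. The mathematical content is the same; your version merely replaces the paper's informal $\vdots$-style recursive expansion with an explicit inductive hypothesis, which is a presentational (and arguably rigor-improving) difference rather than a different route.
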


\begin{proof}
We prove it using the basic property of the Pascal's triangle. That is every entry is the sum of two entries in the preceding row, one on the top and the other on the top-left of the current entry.
\begin{equation}
\label{eqn:pascal}
 \binom{n+1}{r}=\binom{n}{r}+\binom{n}{r-1}	
 \end{equation}

We get equation-\ref{eqn:hockey} by applying equation-\ref{eqn:pascal} recursively and repeating the process till $r$ is decreased to zero.

\begin{eqnarray*}
\binom{n+1}{r}&=&\binom{n}{r}+\binom{n}{r-1}\\
&=&\binom{n}{r}+\binom{n-1}{r-1}+\binom{n-1}{r-2}\\
&=&\binom{n}{r}+\binom{n-1}{r-1}+\binom{n-2}{r-2}+\binom{n-2}{r-3}\\
&&\quad \vdots \\
&=&\binom{n}{r}+\binom{n-1}{r-1}+\cdots+\binom{n-r+i}{i}+\cdots+\binom{n-r+1}{1}+\binom{n-r}{0}\\
\end{eqnarray*}

\end{proof}

Since $\binom{x}{0}=1$, $\forall x \in \mathbb{N}_{\geq 0}$. The consequence of lemma-\ref{the:hockey} is the next corollary. 
\begin{corollary}
\label{cor:binomSum}
\begin{equation}
\binom{n+r}{r}>\sum_{i=1}^r \binom{n+i-1}{i}
\end{equation}

\end{corollary}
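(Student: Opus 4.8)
The plan is to obtain the corollary as an almost immediate consequence of the Hockey-Stick identity of Lemma~\ref{the:hockey}, by choosing its free parameter so that the left-hand side becomes exactly $\binom{n+r}{r}$. Concretely, I would substitute $n \mapsto n+r-1$ throughout the statement of the lemma. With this substitution the top of the binomial on the left, namely $n+1$, becomes $n+r$, so the left-hand side reads $\binom{n+r}{r}$, while each summand $\binom{n-r+i}{i}$ becomes $\binom{(n+r-1)-r+i}{i}=\binom{n+i-1}{i}$. Thus the lemma delivers the exact identity $\binom{n+r}{r}=\sum_{i=0}^{r}\binom{n+i-1}{i}$, which already contains the sum appearing on the right-hand side of the corollary, except that its index range starts at $i=0$ rather than at $i=1$.

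Next I would peel off the $i=0$ term of this sum. That term is $\binom{n-1}{0}$, which equals $1$ for every $n\geq 1$ by the convention $\binom{x}{0}=1$ used just before the corollary. Separating it from the remaining terms gives $\binom{n+r}{r}=1+\sum_{i=1}^{r}\binom{n+i-1}{i}$. Since every binomial coefficient is non-negative and the isolated term is strictly positive, the desired strict inequality $\binom{n+r}{r}>\sum_{i=1}^{r}\binom{n+i-1}{i}$ follows at once.

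There is essentially no hard analytic step here; the entire argument is bookkeeping, and the strictness is supplied precisely by the extra $i=0$ summand. The one point that needs care is the index matching in the substitution: one must verify that after replacing $n$ by $n+r-1$ the summand collapses exactly to $\binom{n+i-1}{i}$ and that the separated term is $\binom{n-1}{0}$. The only genuine subtlety is the boundary value of $n$. For $n\geq 1$ the term $\binom{n-1}{0}=1$ and the argument is clean; the degenerate case $n=0$, where $\binom{n-1}{0}$ falls outside the stated domain of the binomial definition, should be checked directly, and there one sees that the left-hand side equals $\binom{r}{r}=1$ while every term $\binom{i-1}{i}$ on the right vanishes, so the strict inequality still holds.
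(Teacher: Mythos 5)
Your proof is correct and is essentially the paper's own argument: the paper offers no separate derivation but presents the corollary as an immediate consequence of Lemma~\ref{the:hockey} together with the fact $\binom{x}{0}=1$, which is exactly your substitution $n\mapsto n+r-1$ followed by peeling off the $i=0$ term (the paper even records the resulting identity $\binom{n+r}{r}=1+\sum_{i=1}^{r}\binom{n+i-1}{i}$ later as equation-\ref{eqn:hockey2}). Your explicit check of the boundary case $n=0$ is a small point of care that the paper glosses over, but it does not change the approach.
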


Also for  $n_1,n_2 \in \mathbb{N}_{\geq 0}$ and $n_2>n_1$, lemma-\ref{the:hockey} gives 

\begin{corollary}[]
\label{cor:binomCompare}
\begin{equation}
\binom{n_2}{r}>\binom{n_1}{r}
\end{equation}
\end{corollary}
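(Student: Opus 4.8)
The plan is to deduce this strict inequality from the single-step Pascal rule equation-\ref{eqn:pascal} that already underlies Lemma~\ref{the:hockey}, rather than re-expanding the full hockey-stick sum on both sides. The key observation is that $\binom{n+1}{r}=\binom{n}{r}+\binom{n}{r-1}$ exhibits $\binom{n}{r}$ as a non-decreasing function of its upper index: since $\binom{n}{r-1}\geq 0$ for every $n\in\mathbb{N}_{\geq 0}$, we get $\binom{n+1}{r}\geq\binom{n}{r}$ at each step. Writing $n_2=n_1+k$ with $k\geq 1$ and summing this one-step relation over the $k$ consecutive increments telescopes to the difference formula
\begin{equation}
\binom{n_2}{r}-\binom{n_1}{r}=\sum_{m=n_1}^{n_2-1}\binom{m}{r-1},
\end{equation}
whose right-hand side is a sum of non-negative terms.

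First I would establish this telescoping identity by induction on $k=n_2-n_1$, the base case $k=1$ being exactly equation-\ref{eqn:pascal} and the inductive step adding one further application of Pascal's rule. This already yields the weak inequality $\binom{n_2}{r}\geq\binom{n_1}{r}$. To upgrade it to the strict inequality claimed, I would isolate the top summand $\binom{n_2-1}{r-1}$ (the term with the largest $m$): by the binomial definition this term is strictly positive precisely when $n_2-1\geq r-1$, that is when $n_2\geq r$. In that regime the whole sum is bounded below by a strictly positive quantity, forcing $\binom{n_2}{r}>\binom{n_1}{r}$ regardless of whether $\binom{n_1}{r}$ is itself zero or positive.

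The main obstacle is really a matter of hypotheses rather than of difficulty: in the degenerate range $n_2<r$ both binomials vanish, so $\binom{n_2}{r}=\binom{n_1}{r}=0$ and the strict inequality fails outright. The statement is therefore correct only once we assume the larger binomial is nonzero, i.e. $n_2\geq r$, which is exactly the regime in which the corollary is later invoked; I would record that assumption explicitly. The connection to Lemma~\ref{the:hockey} advertised in the statement is then transparent: the one-step increment $\binom{n}{r-1}$ driving the telescoping is itself a hockey-stick diagonal sum (Lemma~\ref{the:hockey} applied with $r-1$ in place of $r$), so the entire argument can be phrased purely in terms of that identity if one prefers to cite it directly.
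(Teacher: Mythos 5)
Your proof is correct, and it is in substance the same one-step-monotonicity idea the paper gestures at, executed more carefully on a point where the paper is actually wrong. The paper offers no written proof of this corollary at all: it is simply asserted to follow from Lemma~\ref{the:hockey}, the implicit argument being that the hockey-stick expansion $\binom{n+1}{r}=\binom{n}{r}+\binom{n-1}{r-1}+\cdots+\binom{n-r}{0}$ exceeds its leading term because the trailing term $\binom{n-r}{0}$ equals $1$, after which one chains the one-step increase from $n_1$ up to $n_2$. Your telescoping of Pascal's rule, $\binom{n_2}{r}-\binom{n_1}{r}=\sum_{m=n_1}^{n_2-1}\binom{m}{r-1}$, is the same idea carried out directly, and it buys two things. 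First, an exact difference formula rather than a chained inequality. Second, and more importantly, it exposes the hypothesis the paper omits: under the paper's own convention that $\binom{n}{r}=0$ for $n<r$, the strict inequality is false whenever $n_2<r$ (both sides vanish), and the paper's implicit argument also breaks there, since $\binom{n-r}{0}$ lies outside the paper's definition (negative upper entry) when $n<r$, so the trailing ``$+1$'' is not available. Recording the assumption $n_2\geq r$, as you do, is therefore a genuine correction to the statement rather than a cosmetic restriction; without it the corollary as printed is not a theorem, and any later invocation of it (e.g.\ in the uniqueness proof of Theorem~\ref{the:uniqueness}) is legitimate only when the larger binomial is nonzero.
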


\section{Existense}

\begin{theorem}
\label{the:existence}
$\forall m\geq0, r\geq1$ and $m,r \in \mathbb{N}_{\geq 0}$,  $\exists C_r,\cdots,C_i,\cdots,C_1$ such that $C_i \geq 0$, and  $C_j>C_i$  for $ j>i$ and
\begin{eqnarray}
	\label{eqn:CSum2}
	m&=& \sum_{i=1}^r \binom{C_i}{i} \\
	&=&\binom{C_r}{r}+\binom{C_{r-1}}{r-1}+\cdots+\binom{C_i}{i}+\cdots+\binom{C_2}{2}+\binom{C_1}{1} \nonumber
\end{eqnarray}

\end{theorem}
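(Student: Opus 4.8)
The plan is to prove existence by strong induction on $m$ for all $r \geq 1$ simultaneously. The base case is $m=0$, which is handled by the explicit representation $0^r$ in equation-\ref{eqn:zero}: setting $C_i = i-1$ for each $i$ gives every term $\binom{i-1}{i}=0$, and the strict-increase condition $C_j > C_i$ holds since $j-1 > i-1$ whenever $j>i$. This establishes a representation for zero in any $r$-term system.

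For the inductive step, assume every non-negative integer smaller than $m$ admits a valid $r$-term representation, and fix $m \geq 1$. The key idea is a greedy choice of the leading coefficient $C_r$: I would take $C_r$ to be the largest integer satisfying $\binom{C_r}{r} \leq m$. This $C_r$ is well-defined because $\binom{r-1}{r}=0 \leq m$ guarantees a candidate exists, and Corollary-\ref{cor:binomCompare} ensures $\binom{n}{r}$ is strictly increasing in $n$, so the binomial coefficients eventually exceed $m$ and the maximum is attained. Having fixed $C_r$, I set $m' = m - \binom{C_r}{r}$, so that $0 \leq m' < m$, and apply the induction hypothesis to represent $m'$ as a sum of $r-1$ terms, $m' = \sum_{i=1}^{r-1}\binom{C_i}{i}$ with $C_{r-1} > \cdots > C_1 \geq 0$.

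The main obstacle is verifying that appending the greedily chosen $C_r$ to this representation preserves the strict inequality $C_r > C_{r-1}$; the remaining inequalities $C_{r-1} > \cdots > C_1$ come for free from the induction hypothesis. This is exactly where Corollary-\ref{cor:binomSum} does the work. By maximality of $C_r$ we have $\binom{C_r+1}{r} > m$, hence $m' = m - \binom{C_r}{r} < \binom{C_r+1}{r} - \binom{C_r}{r} = \binom{C_r}{r-1}$, using the Pascal identity (equation-\ref{eqn:pascal}). I would then argue by contradiction: if $C_{r-1} \geq C_r$, then by Corollary-\ref{cor:binomSum} (applied with the appropriate shift) the sum $\sum_{i=1}^{r-1}\binom{C_i}{i}$ representing $m'$ would force $m' \geq \binom{C_r}{r-1}$, contradicting the bound just derived. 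Therefore $C_{r-1} < C_r$, and the full sequence $C_r > C_{r-1} > \cdots > C_1 \geq 0$ satisfies all the constraints with $m = \sum_{i=1}^r \binom{C_i}{i}$.

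One technical point deserves care in the write-up: the induction must be organized so that the hypothesis is available for the $(r-1)$-term system at the smaller value $m'$, so I would phrase the induction on $m$ with the statement quantified over all $r \geq 1$, and treat $r=1$ as an immediate sub-case where $m = \binom{C_1}{1}$ with $C_1 = m$. With the strong induction framed this way, the greedy extraction of $C_r$ together with the two corollaries closes the argument cleanly.
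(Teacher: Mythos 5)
Your proposal is correct, but it proves existence by a genuinely different route than the paper does. The paper fixes $r$ and inducts on $m$ via a \emph{successor/carry} argument: given a representation of $k$, it adds $1$, identifies the maximal run of consecutive low-order coefficients $C_1, C_1+1, \dots, C_1+j-1$, collapses that run plus the added $1$ into the single term $\binom{C_1+j}{j}$ using the Hockey-Stick identity (equation-\ref{eqn:hockey}), and then pads with the zero block $0^{j-1}$ to restore the count of $r$ terms; this is in effect the lexicographic increment algorithm for combinadics. You instead run a strong induction quantified over all $r$, greedily extract the maximal leading coefficient $C_r$ with $\binom{C_r}{r} \leq m$, recurse on $m' = m - \binom{C_r}{r}$ with $r-1$ terms, and use maximality plus Pascal's rule (equation-\ref{eqn:pascal}) to get $m' < \binom{C_r}{r-1}$, which forces $C_{r-1} < C_r$; this is the unranking algorithm rather than the increment algorithm. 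Each approach buys something: the paper's argument keeps $r$ fixed (no need to quantify the hypothesis over $r$) and exhibits how representations evolve as $m$ grows, while your greedy argument directly computes the digits of an arbitrary $m$, and its maximality analysis is precisely the mechanism the paper redeploys in Section 3 for uniqueness --- so your route would let existence and uniqueness fall out of one construction. Two small points to fix in a full write-up: the final contradiction needs corollary-\ref{cor:binomCompare} (monotonicity, giving $m' \geq \binom{C_{r-1}}{r-1} \geq \binom{C_r}{r-1}$), not corollary-\ref{cor:binomSum}, which bounds a sum from above rather than below; and you should note explicitly that $m \geq 1$ forces $C_r \geq r$, hence $\binom{C_r}{r} \geq 1$, which is what guarantees the strict decrease $m' < m$ that your induction requires.
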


\begin{proof}

We prove it using mathematical induction.

\textbf{Base Case:} Take $m=0$ as the base case. 

\begin{equation}
0 = \binom{r-1}{r}+\cdots+\binom{i-1}{i}+\cdots+\binom{1}{2}+\binom{0}{1}
\end{equation}
Both LHS and RHS are zero without violating the conditions on $C_i$.\\

\textbf{Induction Hypothesis:}

Each $m \leq k$ has a representation as sum of $r$ binomial coefficients.

\begin{eqnarray}
	\label{eqn:CSum3}
	k&=& \sum_{i=1}^r \binom{C_i}{i} \\
	&=&\binom{C_r}{r}+\binom{C_{r-1}}{r-1}+\cdots+\binom{C_2}{2}+\binom{C_1}{1} \nonumber
\end{eqnarray}

We need to show that $k+1$ can be written as sum of $r$ binomial coefficients.

\textbf{Induction Step:} Add $1$ to both sides of equation-\ref{eqn:CSum2}.

\begin{eqnarray}
	k+1&=& \sum_{i=1}^r \binom{C_i}{i}+1 \\
	&=&\binom{C_r}{r}+\binom{C_{r-1}}{r-1}+\cdots+\binom{C_2}{2}+\binom{C_1}{1}+1 \nonumber
\end{eqnarray}

Consider the first $j$ $C_i$'s that are consective. That means $C_{l+1}=C_l+1$ for $1\leq l<j$

\begin{eqnarray}
\label{eqn:CSumP1}
k+1&=& \sum_{i=1}^r \binom{C_i}{i}+1\\ \nonumber
&=&\binom{C_r}{r}+\binom{C_{r-1}}{r-1}+\cdots+\binom{C_1+j+\alpha}{j+1}+   \\
&&\left\{ \binom{C_1+j-1}{j}+\cdots+\binom{C_1+1}{2}+\binom{C_1}{1}+1 \right\} \nonumber
\end{eqnarray}

Where $\alpha\geq 1$. The part enclosed in curly brackets in equation-\ref{eqn:CSumP1} can be simplified using the Hockey-Stick Identity of equation-\ref{eqn:hockey2}.

	\begin{equation}
		\label{eqn:hockey2}
		\binom{n+r}{r} = 1+\sum_{i=1}^{r} \binom{n+i-1}{i}	
	\end{equation}

The equation-\ref{eqn:CSumP1} now becomes,
\begin{eqnarray}
\label{eqn:CSumP3}
k+1&=&\binom{C_r}{r}+\binom{C_{r-1}}{r-1}+\cdots+\binom{C_1+j+\alpha}{j+1}+   \binom{C_1+j}{j} 
\end{eqnarray}

The equation-\ref{eqn:CSumP3} has $r-j+1$ sum terms instead of $r$ terms. To complete the number of terms we add $0^{j-1}=\binom{j-2}{j-1}+\binom{j-3}{j-2}+\cdots+\binom{1}{2}+\binom{0}{1}$ to its RHS.

\begin{eqnarray}
\label{eqn:CSumP4}
k+1&=& \sum_{i=j+1}^{r} \binom{C_i}{i}+\binom{C_1+j}{j}+\left\{ \sum_{i=1}^{j-1} \binom{i-1}{i} \right\} \\ \nonumber
&=&\binom{C_r}{r}+\binom{C_{r-1}}{r-1}+\cdots+\binom{C_{j+1}}{j+1}+\binom{C_1+j}{j}+   \\
&&\left\{ \binom{j-2}{j-1}+\binom{j-3}{j-2}+\cdots+\binom{1}{2}+\binom{0}{1} \right\} \nonumber
\end{eqnarray}

In equation-\ref{eqn:CSumP4} the part in curly brackets is zero.

Q.E.D.

\end{proof}

\section{Uniqueness}
\begin{theorem}
\label{the:uniqueness}
$\forall m\geq0, r\geq1$ and $m,r \in \mathbb{N}_{\geq 0}$,  we have unique $C_r,\cdots,C_i,\cdots,C_1$ such that $C_i \geq 0$, and  $C_j>C_i$  for $ j>i$ and
\begin{eqnarray}
	\label{eqn:CSum4}
	m&=& \sum_{i=1}^r \binom{C_i}{i} \\
	&=&\binom{C_r}{r}+\binom{C_{r-1}}{r-1}+\cdots+\binom{C_i}{i}+\cdots+\binom{C_2}{2}+\binom{C_1}{1} \nonumber
\end{eqnarray}
\end{theorem}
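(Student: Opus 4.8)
The plan is to argue by induction on the number of terms $r$, using a greedy characterization of the leading index $C_r$. For the base case $r=1$, equation-\ref{eqn:CSum4} reads $m=\binom{C_1}{1}=C_1$, which forces $C_1=m$ and gives uniqueness immediately. For the inductive step I would assume that every non-negative integer has a unique representation as a sum of $r-1$ binomial coefficients satisfying the stated constraints, and then show that any two $r$-term representations of the same $m$ must coincide.

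The heart of the argument is to show that the leading index $C_r$ is completely determined by $m$ and $r$; concretely, that $\binom{C_r}{r}\le m<\binom{C_r+1}{r}$ for every valid representation. The lower bound is immediate because every term $\binom{C_i}{i}$ is non-negative, so $m=\binom{C_r}{r}+\sum_{i=1}^{r-1}\binom{C_i}{i}\ge\binom{C_r}{r}$. For the upper bound, the strictly decreasing condition $C_r>C_{r-1}>\cdots>C_1$ gives $C_i\le C_r-r+i$ for each $i<r$, so by the monotonicity of Corollary-\ref{cor:binomCompare} each lower term satisfies $\binom{C_i}{i}\le\binom{C_r-r+i}{i}$. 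Summing and applying Corollary-\ref{cor:binomSum} with $n=C_r-r+1$ and $r$ replaced by $r-1$ then yields
\begin{equation}
\sum_{i=1}^{r-1}\binom{C_i}{i}\le\sum_{i=1}^{r-1}\binom{C_r-r+i}{i}<\binom{C_r}{r-1}.
\end{equation}
Adding $\binom{C_r}{r}$ to both sides and using Pascal's rule equation-\ref{eqn:pascal} in the form $\binom{C_r}{r}+\binom{C_r}{r-1}=\binom{C_r+1}{r}$ gives $m<\binom{C_r+1}{r}$, as required.

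Because $\binom{x}{r}$ is strictly increasing in $x$ for $x\ge r$, and because the distinctness constraint forces $C_r\ge r-1$, the double inequality $\binom{C_r}{r}\le m<\binom{C_r+1}{r}$ admits exactly one admissible value of $C_r$; hence any two representations of $m$ share the same leading index. Subtracting $\binom{C_r}{r}$ from $m$ then leaves a non-negative integer $m'=m-\binom{C_r}{r}$ together with a valid $(r-1)$-term representation $m'=\sum_{i=1}^{r-1}\binom{C_i}{i}$, whose internal constraints $C_{r-1}>\cdots>C_1\ge0$ are inherited unchanged. The induction hypothesis applies to $m'$ and forces the remaining indices $C_{r-1},\ldots,C_1$ to agree, completing the step.

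I expect the main obstacle to be the upper-bound estimate, where the index bookkeeping must line up exactly so that the extremal case of consecutive lower indices matches the hypotheses of Corollary-\ref{cor:binomSum}; once that inequality is secured, the greedy determination of $C_r$ and the descent to $r-1$ terms are routine. A secondary point requiring care is the boundary behavior at small $m$ (in particular $m=0$, whose unique representation is the $0^r$ term), which is handled by the observation that the strict monotonicity of the $C_i$ already pins $C_r\ge r-1$.
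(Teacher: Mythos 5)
Your proof is correct, but it follows a genuinely different route from the paper's. The paper gives a direct contradiction argument: assuming two distinct $r$-term representations $A$ and $B$ of $m$, it forms $A'$ and $B'$, the terms of each not shared with the other, takes the largest coefficients appearing in $A'$ and $B'$, and applies corollary-\ref{cor:binomSum} to conclude that the side whose largest coefficient is smaller has strictly smaller sum, contradicting $\sum A'=\sum B'$. You instead induct on $r$, pinning down the leading index through the greedy sandwich $\binom{C_r}{r}\le m<\binom{C_r+1}{r}$: the upper bound comes from bounding the tail via corollary-\ref{cor:binomSum} (with $n=C_r-r+1$, which is admissible since $C_r\ge r-1$) followed by Pascal's rule equation-\ref{eqn:pascal}, uniqueness of $C_r$ follows because $\binom{x}{r}$ is strictly increasing on the admissible range of indices, and the induction hypothesis then forces the tails of any two representations to agree. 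The crucial inequality is the same in both proofs --- the hockey-stick bound packaged as corollary-\ref{cor:binomSum} --- so the mathematical core is shared, but your packaging buys something real: it is constructive (it is exactly the greedy unranking algorithm, and the same sandwich would also yield existence), and it avoids the paper's set-difference bookkeeping, which is the shakiest part of the published argument (there the largest unshared terms are written as if both sit at position $r$, and the ``without loss of generality $A'$ is empty'' passage is garbled). The paper's route, in exchange, is shorter, needs no induction, and treats both representations symmetrically in a single contradiction. Two small points to tighten in yours: corollary-\ref{cor:binomCompare} is stated as a strict inequality, so cite it in the weak form $\binom{C_i}{i}\le\binom{C_r-r+i}{i}$ that you actually use; and the strict increase of $\binom{x}{r}$ already begins at $x=r-1$ (where it jumps from $0$ to $1$), which is precisely what makes the $m=0$ case come out to the unique value $C_r=r-1$.
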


\begin{proof}
Let $n\in \mathbb{N}_{\geq 0}$

Let $A$ and $B$ be two distinct combinatorial representations of $m$ in $r$ binomial coefficients.

\[ \sum A=\binom{a_r}{r}+\cdots+\binom{a_2}{1}+\binom{a_1}{1}\]

\[ \sum B=\binom{b_r}{r}+\cdots+\binom{b_2}{1}+\binom{b_1}{1}\]

Consider sets $A'=A-B$ and $B'=B-A$, carrying elements of $A$ and $B$ that are not present in the other set.

Since $A$ and $B$ have equal sums:
\begin{equation}
	\label{eqn:equalSum}
	\sum A' = \sum B'
\end{equation}

Without loss of generality assume that $A'$ is empty.

To have the same sum with non-negative coefficients, $B'$ should also be empty. However because $A\neq B$, it follows that both $A'$ and $B'$ must be non-empty.\\

Suppose $\binom{C_A}{r}$ and $\binom{C_B}{r}$ be the largest coefficients in $A'$ and $B'$ respectively. From corollary-\ref{cor:binomCompare}, for non-negative integers $j,r,k \in \mathbb{N}_{\geq 0}$, $ \binom{j}{r}>\binom{k}{r}$ for $j>k$

Since $A'$ and $B'$ carry no common elements, therefore $C_A \neq C_B$ .

Without loss of generality suppose that $C_A<C_B$.

From corollary-\ref{cor:binomSum} it follows

\[ \sum A' < \binom{C_{A+1}}{r}\]
and so:
\[ \sum A' < \binom{C_{B}}{r}\]

\[ n \leq \sum A' < \binom{C_{A+1}}{r} \leq \binom{C_B}{r} \leq n  \]

But from equation-\ref{eqn:equalSum}:
\[ \sum A' = \sum B' \]

From this contradiction, the case that $A'$ and $B'$ are non-empty is not possible.

Therefore $A' = B' = \emptyset$ and so $A=B$.

Thus the combinatorial representation is unique.
\end{proof}

\bibliographystyle{amsplain}

\end{document}